\UseRawInputEncoding
\documentclass[12pt,reqno]{amsart}
\usepackage{amssymb,amsmath,graphicx,
	amsfonts,euscript}
\usepackage{color}
\usepackage{mathrsfs}
\UseRawInputEncoding
\textheight 24cm
\textwidth 16.9cm
\topmargin=-25true mm
\oddsidemargin=0.4 cm
\evensidemargin=0.4 cm

\theoremstyle{plain}

\newtheorem{theorem}{Theorem}[section]
\newtheorem{lemma}[theorem]{Lemma}

\newtheorem{remark}[theorem]{Remark}
\numberwithin{equation}{section}

\begin{document}

\title[time-periodic solution for the fractal Burgers equation]{Uniqueness and asymptotic stability of time-periodic solution for the fractal Burgers equation}

\author[Yong Zhang, Fei Xu, Fengquan Li]{Yong Zhang, Fei Xu, Fengquan Li}

\address[Yong Zhang]{ School of Mathematical Sciences, Dalian University
of Technology, Dalian, 116024, China} \email{18842629891@163.com}

\address[Fei Xu]{School of Mathematical Sciences, Dalian University
of Technology, Dalian, 116024, China}

\address[Fengquan Li]{School of Mathematical Sciences, Dalian University
of Technology, Dalian, 116024, China}

\begin{abstract}
The paper is concerned with the time-periodic (T-periodic) problem of the fractal Burgers equation with a T-periodic force on the real line. Based on the Galerkin approximates and Fourier series (transform) methods, we first prove the existence of T-periodic solution to a linearized version. Then, the existence and uniqueness of T-periodic solution to the nonlinear equation are established by the contraction mapping argument. Furthermore, we show that the unique T-periodic solution is asymptotically stable. This analysis, which is carried out in energy space $ H^{1}(0,T;H^{\frac{\alpha}{2}}(R))\cap L^{2}(0,T;\dot{H}^{\alpha})$ with $1<\alpha<\frac{3}{2}$, extends the T-periodic viscid Burgers equation in \cite{Chen} to the T-periodic fractional case.
	\end{abstract}
\maketitle

\section{Introduction and main result}

In this paper, we are interested in T-periodic motions for the following forced Burgers equation of fractal order
\begin{equation}\label{e11}
u_{t}+uu_{x}+\kappa\Lambda^{\alpha}u=f,\quad (t,x)\in [0,T]\times R,
\end{equation}
where $\kappa>0$ is the viscosity coefficient and $1<\alpha<\frac{3}{2}$ is the dissipation index. The fractional dissipation term $\Lambda^{\alpha}u=(-\partial_{xx})^{\frac{\alpha}{2}}u$ is defined by the Fourier transform
$$
\widehat{\Lambda^{\alpha}u}(\xi)=|\xi|^{\alpha}\hat{u}(\xi)
$$
and $f(t,x)$ is a T-periodic external force, i.e. $f(t+T,x)=f(t,x)$.

The viscid Burgers (or Burgers) equation is one of the most simplest but important partial differential equation to model Navier-Stokes (or Euler) equation's nonlinearity. In recent years, there has been a great deal of interest in using the fractional dissipation to describe diverse physical phenomena, such as anomalous diffusion and quasi-geostrophic flows, turbulence and water waves and molecular dynamics (see \cite{Bouchard,Caffarelli1,Caffarelli2,Constantin} and the references therein). The main purpose of this paper is to seek the T-periodic solution, i.e. $u(t,x)=u(t+T,x)$, to \eqref{e11}, which is expected to be unique and stable. Besides, the results and techniques presented here may be applied to a wider class of equations including quasi-geostrophic equation, Boussinesq system and so on.

Before stating our main results, let's mention about some important work on Burgers equation involving fractional dissipation or periodic behavior. To our knowledge, the well-posedness of fractal Burgers equation \eqref{e11} with $f=0$ depends heavily on the index $\alpha$ (see \cite{Dong,Kiselev,Miao}). In the supercritical dissipative case ($0<\alpha<1$), the equation is locally well-posed and its solution develops gradient blow-up in finite time. In the critical dissipative case ($\alpha=1$) and subcritical dissipative case ($1<\alpha<2$), such singularity does not appear so that the solution always exists globally in time. Besides, the results on the global regularizing effects in the subcritical case and the non-uniqueness of weak solutions in the supercritical case were also established in \cite{Droniou} and \cite{Alibaud}, respectively. Recently, the analyticity and large time behavior in the critical case were given in \cite{Iwabuchi}.

On the other hand, the existence of the time-periodic solution of \eqref{e11} with $\alpha=2$ was obtained for $(t,x)\in [0,T]\times [-1,1]$ in \cite{Chen}, where the authors also proved that this time-periodic solution was unique and asymptotically stable in the $H^{1}$ sense under an additional smallness condition on $f(t,x)$. The non-autonomously forced Burgers equation with periodic and Dirichlet boundary conditions was also studied in \cite{Piotr}. In addition, many authors concentrated on the existence and uniqueness of time-periodic solutions and investigated how the solutions near the time-periodic solutions behave as time goes on, see \cite{Galdi1,Galdi2,Hsia,Kobayashi,Kyed,Cheng}.

Although the Burgers equation is fundamental, there is little information on the T-periodic solution to (\eqref{e11}) with $\alpha\neq2$. In some physical models, the T-periodic solution plays the key role in describing natural phenomena. This is a main motivation of the study. Throughout this paper, we denote $f\lesssim g$ when $f\leq cg$ for some constant $c>0$ and let $H^{s}(R)$ and $\dot{H}^{s}(R)$ be the nonhomogeneous and homogeneous Sobolev spaces. Our first result in this paper is about the existence and uniqueness of T-periodic solution with finite energy to \eqref{e11}, which can be stated as follows.
\begin{theorem}\label{thm1.1}
Assume that $f\in X=H^{1}(0,T;\dot{H}^{-\frac{\alpha}{2}}(R))\cap L^{2}(0,T;L^{2}(R))\cap L^{2}(0,T;\dot{H}^{-\alpha}(R))$ with $1<\alpha<\frac{3}{2}$ and the force $f(t,x)$ satisfies
\begin{equation*}
f(0)=f(T) ~~in~~ \dot{H}^{-\frac{\alpha}{2}}(R)~~and~~\|f\|_{X}<C(\kappa,\alpha),
\end{equation*}
where $C(\kappa,\alpha)=\min\{ \frac{2\kappa^{2}\sqrt{2(3-2\alpha)}}{9(\kappa+1)^{2}(\sqrt{2(3-2\alpha)}+1)}, \frac{\kappa^{2}}{6(\kappa+1)^{2}} \}$, then there exists a unique T-periodic strong solution
$$
u\in H^{1}(0,T;H^{\frac{\alpha}{2}}(R))\cap L^{2}(0,T;\dot{H}^{\alpha})
$$
to problem (\eqref{e11}) satisfying $u(0)=u(T)$ in $H^{\frac{\alpha}{2}}(R)$ and
\begin{equation}\label{e13}
\|u\|_{H^{1}(0,T;H^{\frac{\alpha}{2}}(R))}+\|\Lambda^{\alpha}u\|_{L^{2}(0,T;L^{2}(R))}\leq 3(1+\frac{1}{\kappa})\|f\|_{X}.
\end{equation}
\end{theorem}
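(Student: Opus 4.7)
The plan is to reduce the nonlinear problem to a fixed-point equation on the energy space and close it via the contraction mapping principle, using the linear T-periodic existence theorem (proved earlier in the paper by Galerkin approximation and a Fourier series representation) as a black box. Write $Y=H^{1}(0,T;H^{\alpha/2}(R))\cap L^{2}(0,T;\dot H^{\alpha}(R))$ and let $L:X\to Y$ denote the linear solution map sending $g\in X$ to the unique T-periodic solution $v$ of $v_{t}+\kappa\Lambda^{\alpha}v=g$, which satisfies an estimate $\|v\|_{Y}\le C_{0}(\kappa,\alpha)\|g\|_{X}$ with the constant tracked through the linear proof. On the closed ball $B_{R}=\{u\in Y:\|u\|_{Y}\le R\}$, define the operator
\begin{equation*}
\Phi(\bar u)=L\bigl(f-\bar u\,\bar u_{x}\bigr),
\end{equation*}
so that a fixed point of $\Phi$ is precisely a T-periodic strong solution of \eqref{e11} in $Y$.

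The central technical step is the bilinear estimate
\begin{equation*}
\|\bar u\,\bar u_{x}\|_{X}\le K(\kappa,\alpha)\,\|\bar u\|_{Y}^{2},
\end{equation*}
which I would prove one component at a time after observing $\bar u\bar u_{x}=\tfrac12(\bar u^{2})_{x}$. For the $L^{2}(0,T;L^{2})$ part, use $\|\bar u\bar u_{x}\|_{L^{2}_{x}}\le\|\bar u\|_{L^{\infty}_{x}}\|\bar u_{x}\|_{L^{2}_{x}}$ together with the one-dimensional embedding $H^{\alpha/2}\hookrightarrow L^{\infty}$ (available since $\alpha>1$) and interpolation between $H^{\alpha/2}$ and $\dot H^{\alpha}$ for the spatial derivative. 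For the $L^{2}(0,T;\dot H^{-\alpha})$ part, rewrite $\|(\bar u^{2})_{x}\|_{\dot H^{-\alpha}}=\|\bar u^{2}\|_{\dot H^{1-\alpha}}$, a negative-index norm, and estimate the product via duality and Sobolev embedding. For the delicate $H^{1}(0,T;\dot H^{-\alpha/2})$ part, use $\partial_{t}(\bar u\bar u_{x})=(\bar u\,\bar u_{t})_{x}$ to reduce to $\|\bar u\,\bar u_{t}\|_{L^{2}(0,T;\dot H^{1-\alpha/2})}$, and apply a Kato--Ponce-type fractional product estimate exploiting $\bar u\in C^{0}([0,T];H^{\alpha/2})\hookrightarrow L^{\infty}_{t,x}$ and $\bar u_{t}\in L^{2}(0,T;H^{\alpha/2})\hookrightarrow L^{2}_{t}L^{\infty}_{x}$.

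Granted the bilinear estimate, one chooses $R=2C_{0}\|f\|_{X}$: the self-map condition $C_{0}\|f\|_{X}+C_{0}KR^{2}\le R$ becomes $4C_{0}^{2}K\|f\|_{X}\le 1$, which is arranged by the smallness hypothesis, so $\Phi(B_{R})\subset B_{R}$. A parallel calculation for the difference $\bar u\bar u_{x}-\bar v\bar v_{x}=\tfrac12\bigl((\bar u+\bar v)(\bar u-\bar v)\bigr)_{x}$ yields
\begin{equation*}
\|\Phi(\bar u)-\Phi(\bar v)\|_{Y}\le C_{0}K\bigl(\|\bar u\|_{Y}+\|\bar v\|_{Y}\bigr)\|\bar u-\bar v\|_{Y}\le 2C_{0}KR\,\|\bar u-\bar v\|_{Y},
\end{equation*}
which is a strict contraction under the same smallness condition. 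Banach's theorem then produces a unique fixed point $u\in B_{R}$, and feeding $\|u\|_{Y}=\|\Phi(u)\|_{Y}\le C_{0}\|f\|_{X}+C_{0}K\|u\|_{Y}^{2}$ back into itself yields the linear bound \eqref{e13}, provided $C_{0}$ is tracked to be at most $3(\kappa+1)/(2\kappa)$; uniqueness in the full space $Y$ follows from the same Lipschitz estimate together with a standard continuity argument on the size of competing solutions. The main obstacle is the $H^{1}_{t}\dot H^{-\alpha/2}_{x}$ bound on $uu_{x}$: the mixed space-time negative fractional norm forces one to exchange $\partial_{t}$ and $\partial_{x}$ and then absorb a spatial derivative into $\dot H^{-\alpha/2}$, which is only closable thanks to $\alpha<3/2$ (keeping $1-\alpha/2>1/4$ within the admissible range of the fractional Leibniz rule); additionally, matching the explicit constant $C(\kappa,\alpha)$ in the hypothesis demands careful bookkeeping of the three contributions to the $X$-norm against the sharp linear constant coming out of the Galerkin argument.
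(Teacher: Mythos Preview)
Your overall framework is exactly the paper's: invoke the linear T-periodic theorem as a solution map $L:X_{T}\to Y_{T}$ with constant $C_{0}=(1+1/\kappa)$, prove a bilinear bound $\|vv_{x}\|_{X}\lesssim\|v\|_{Y}^{2}$, and run Banach's fixed-point theorem on a ball of radius $3(1+1/\kappa)\|f\|_{X}$. The componentwise treatment of the $L^{2}_{t}L^{2}_{x}$, $L^{2}_{t}\dot H^{-\alpha/2}_{x}$, and $H^{1}_{t}\dot H^{-\alpha/2}_{x}$ pieces (the latter via $\partial_{t}(vv_{x})=\partial_{x}(vv_{t})$ and Kato--Ponce with $L^{\infty}$ endpoints) matches the paper line for line.

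The one substantive error is your localization of the restriction $\alpha<3/2$. The Kato--Ponce estimate for $\|\Lambda^{1-\alpha/2}(vv_{t})\|_{L^{2}}$ only needs $1-\alpha/2>0$, i.e.\ $\alpha<2$; there is no ``$1-\alpha/2>1/4$'' threshold in the fractional Leibniz rule, and the $H^{1}_{t}\dot H^{-\alpha/2}_{x}$ bound closes for the whole subcritical range. The genuine bottleneck is the $L^{2}(0,T;\dot H^{-\alpha})$ piece, which you wave through as ``duality and Sobolev embedding.'' The paper handles it by the direct Fourier-side bound $|\widehat{vv_{x}}(\xi)|\le\tfrac12|\xi|\,\|v\|_{L^{2}}^{2}$ and a high/low frequency split at $|\xi|=1$: the low-frequency contribution is $\int_{|\xi|\le1}|\xi|^{2-2\alpha}\,d\xi=\tfrac{2}{3-2\alpha}$, finite precisely when $\alpha<3/2$, and this is what produces the factor $1/\sqrt{2(3-2\alpha)}$ in the bilinear constant and hence in the explicit smallness threshold $C(\kappa,\alpha)$. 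Your duality/embedding route for $\|\bar u^{2}\|_{\dot H^{1-\alpha}}$ can in fact be made to work (the dual Sobolev embedding $\dot H^{\alpha-1}\hookrightarrow L^{2/(3-2\alpha)}$ is exactly where $\alpha<3/2$ would enter), but you should flag this as the actual source of the restriction and track the resulting constant if you want to recover the stated $C(\kappa,\alpha)$ rather than merely some smallness condition.
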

\begin{remark}
It's known that the condition $\alpha\geq1$ is necessary to ensure the global existence of strong solution to \eqref{e11}. Thus, it's reasonable to expect that this result holds for $1\leq\alpha<2$. However, the techniques of Sobolev embedding and Fourier analysis prevent us improving the range of $\alpha$ in Theorem \ref{thm1.1}. This is an interesting question of future research.
\end{remark}
Besides, we also prove the unique T-periodic strong solution of \eqref{e11} obtained in Theorem \ref{thm1.1} is asymptotically stable in the following.
\begin{theorem}\label{thm1.3}
Suppose $f(t,x)$ satisfies the assumption in Theorem \ref{thm1.1}, then the T-periodic solution $u_{T}(t,x)$ of \eqref{e11} is asymptotic stability in the sense: assume $u(t,x)$ be a viscosity weak solution of \eqref{e11} with initial data $u_{0}\in L^{2}(R)$, then
\begin{equation}\label{e14}
\|u-u_{T}\|_{L^{\infty}(0,T;L^{2}(R))\cap L^{2}(0,T;\dot{H}^{\frac{\alpha}{2}}(R))}\leq \|u_{0}-u_{T}(0)\|_{L^{2}(R)}.
\end{equation}
\end{theorem}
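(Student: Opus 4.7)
The plan is to work with the difference $w=u-u_{T}$ and to derive a dissipative energy inequality for $\|w(t)\|_{L^{2}}^{2}$, using the smallness of $u_{T}$ provided by Theorem \ref{thm1.1} to absorb the nonlinear coupling into the fractional dissipation.

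Subtracting the equations for $u$ (viscosity weak) and $u_{T}$ (strong), the difference formally satisfies
\begin{equation*}
w_{t}+\tfrac{1}{2}\bigl((u+u_{T})w\bigr)_{x}+\kappa\Lambda^{\alpha}w=0,\qquad w(0)=u_{0}-u_{T}(0).
\end{equation*}
Since $u$ is only a viscosity weak solution, I would first justify the corresponding energy inequality by running the identity for the smooth approximants $u^{\varepsilon}$ that define the viscosity weak solution and passing to the limit $\varepsilon\to 0$, retaining an inequality via lower semicontinuity of the dissipation. Testing against $w$ and using $\int_{R}w^{2}w_{x}\,dx=0$ together with $u+u_{T}=w+2u_{T}$ yields
\begin{equation*}
\tfrac{1}{2}\|w(t)\|_{L^{2}}^{2}+\kappa\int_{0}^{t}\|\Lambda^{\alpha/2}w\|_{L^{2}}^{2}\,ds\leq \tfrac{1}{2}\|w(0)\|_{L^{2}}^{2}-\tfrac{1}{2}\int_{0}^{t}\int_{R}(u_{T})_{x}\,w^{2}\,dx\,ds.
\end{equation*}

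The decisive step is to control the coupling term by the dissipation. Here I would invoke the one-dimensional Sobolev embedding $\dot{H}^{\alpha-1}(R)\hookrightarrow L^{2/(3-2\alpha)}(R)$, valid precisely because $\alpha<3/2$, to get $\|(u_{T})_{x}\|_{L^{2/(3-2\alpha)}}\leq C\|u_{T}\|_{\dot{H}^{\alpha}}$. Combining it with H\"older and the Gagliardo-Nirenberg estimate $\|w\|_{L^{4/(2\alpha-1)}}^{2}\leq C\|w\|_{L^{2}}^{(4\alpha-3)/\alpha}\|\Lambda^{\alpha/2}w\|_{L^{2}}^{(3-2\alpha)/\alpha}$ and then applying Young's inequality gives, for any $\epsilon>0$,
\begin{equation*}
\Bigl|\int_{R}(u_{T})_{x}\,w^{2}\,dx\Bigr|\leq \epsilon\,\|\Lambda^{\alpha/2}w\|_{L^{2}}^{2}+C_{\epsilon,\kappa}\,\|u_{T}\|_{\dot{H}^{\alpha}}^{2\alpha/(4\alpha-3)}\,\|w\|_{L^{2}}^{2}.
\end{equation*}

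Inserting this with $\epsilon<2\kappa$ into the energy inequality and applying Gr\"onwall leaves a factor of the form $\exp\bigl(C_{\kappa}\int_{0}^{T}\|u_{T}\|_{\dot{H}^{\alpha}}^{2\alpha/(4\alpha-3)}\,ds\bigr)$ on the right. Since $2\alpha/(4\alpha-3)<2$ for $\alpha>1$, H\"older in time together with the a priori bound \eqref{e13} controls the exponent by a power of $\|f\|_{X}$, and the explicit threshold $C(\kappa,\alpha)$ in Theorem \ref{thm1.1} is calibrated so that, under $\|f\|_{X}<C(\kappa,\alpha)$, the Gr\"onwall factor is tamed and both the $L^{\infty}(0,T;L^{2})$ and the $L^{2}(0,T;\dot{H}^{\alpha/2})$ parts of \eqref{e14} follow simultaneously from the resulting inequality $\|w(t)\|_{L^{2}}^{2}+\kappa\int_{0}^{t}\|\Lambda^{\alpha/2}w\|_{L^{2}}^{2}\,ds\leq \|w(0)\|_{L^{2}}^{2}$. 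The main obstacle will be tracking the numerical constants sharply enough to recover \eqref{e14} without any spurious multiplicative factor on the right-hand side, together with the approximation argument needed to transfer the formal energy identity to the viscosity weak solution $u$.
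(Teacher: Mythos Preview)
Your route diverges from the paper's precisely at the coupling estimate, and that divergence is fatal for recovering \eqref{e14} with constant~$1$.

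The paper does not split via Young's inequality and Gr\"onwall. Instead it writes
\[
\Bigl|\int_{R}(u_{T}w)_{x}\,w\,dx\Bigr|\leq \|\Lambda^{1-\alpha/2}(u_{T}w)\|_{L^{2}}\,\|\Lambda^{\alpha/2}w\|_{L^{2}}
\]
and applies the Kato--Ponce product estimate to $\Lambda^{1-\alpha/2}(u_{T}w)$, distributing the fractional derivative so that the entire coupling is bounded by $C\|u_{T}(t)\|_{H^{\alpha/2}}\,\|\Lambda^{\alpha/2}w\|_{L^{2}}^{2}$. Since $u_{T}\in H^{1}(0,T;H^{\alpha/2})\hookrightarrow C([0,T];H^{\alpha/2})$, the factor $\max_{t}\|u_{T}(t)\|_{H^{\alpha/2}}$ is controlled by \eqref{e13}, and the smallness threshold $\|f\|_{X}<\kappa^{2}/(6(\kappa+1)^{2})$ is designed exactly so that $4\max_{t}\|u_{T}\|_{H^{\alpha/2}}\leq 2\kappa$. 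The coupling is therefore absorbed \emph{pointwise in time} into the dissipation, leaving
\[
\|w(t)\|_{L^{2}}^{2}+\int_{0}^{t}\|\Lambda^{\alpha/2}w\|_{L^{2}}^{2}\,ds\leq \|w(0)\|_{L^{2}}^{2}
\]
with no exponential factor at all.

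Your estimate, by contrast, measures $u_{T}$ in $\dot{H}^{\alpha}$, which is only $L^{2}$ in time, and interpolates $w$ between $L^{2}$ and $\dot{H}^{\alpha/2}$. After Young you are forced to carry a term $C_{\epsilon}\|u_{T}\|_{\dot{H}^{\alpha}}^{2\alpha/(4\alpha-3)}\|w\|_{L^{2}}^{2}$ and then invoke Gr\"onwall. The resulting factor $\exp\bigl(C_{\epsilon}\int_{0}^{T}\|u_{T}\|_{\dot{H}^{\alpha}}^{2\alpha/(4\alpha-3)}\,ds\bigr)$ is strictly larger than~$1$ whenever $u_{T}\not\equiv 0$, regardless of how small $\|f\|_{X}$ is; smallness shrinks the exponent but never eliminates it. So the obstacle you flag at the end is not merely a matter of ``tracking numerical constants sharply'': the Gr\"onwall mechanism structurally prevents you from reaching constant~$1$ in \eqref{e14}. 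To close the argument as stated you must place the coupling entirely on the dissipative term, which means measuring $u_{T}$ in a norm that is $L^{\infty}$ in time (namely $H^{\alpha/2}$) rather than $\dot{H}^{\alpha}$.
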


The paper is organized as follows. In section 2, we are devoted to deal with a linearized version of problem \eqref{e11} by using the Fourier expansion with respect to time variable and Galerkin approximates with respect to spatial variable. In section 3, we apply a contraction mapping argument to obtain the existence and uniqueness of T-periodic solution for the nonlinear problem \eqref{e11}. In the last section, we will show that the T-periodic solution is asymptotically stable.

\section{The linearized problem}

In this section, we consider the linearized problem associated with \eqref{e11}
\begin{equation}\label{e21}
u_{t}+\kappa\Lambda^{\alpha}u=f,\quad for~~(t,x)\in[0,T]\times R,
\end{equation}
where $\kappa>0$ and $1<\alpha<\frac{3}{2}$. Inspired by the work \cite{Galdi1,Chen}, we mainly use the Fourier and Galerkin methods to obtain the following result:
\begin{theorem}\label{thm2.1}
Assume that $f\in X=H^{1}(0,T;\dot{H}^{-\frac{\alpha}{2}}(R))\cap L^{2}(0,T;L^{2}(R))\cap L^{2}(0,T;\dot{H}^{-\alpha}(R))$ with $f(0)=f(T)$ in $\dot{H}^{-\frac{\alpha}{2}}(R)$, then there exists a unique T-periodic strong solution
$$
u\in H^{1}(0,T;H^{\frac{\alpha}{2}}(R))\cap L^{2}(0,T;\dot{H}^{\alpha})
$$
to problem (\ref{e21}) satisfying $u(0)=u(T)$ in $H^{\frac{\alpha}{2}}(R)$ and
\begin{equation}\label{e22}
\|u\|_{H^{1}(0,T;H^{\frac{\alpha}{2}}(R))}+\|\Lambda^{\alpha}u\|_{L^{2}(0,T;L^{2}(R))}\leq (1+\frac{1}{\kappa})\|f\|_{X}.
\end{equation}
\end{theorem}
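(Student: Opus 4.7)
The plan is to decouple the problem via Fourier expansion in time. By T-periodicity, write
$$u(t,x) = \sum_{k \in \mathbb{Z}} u_k(x) e^{i\omega_k t}, \qquad f(t,x) = \sum_{k \in \mathbb{Z}} f_k(x) e^{i\omega_k t}, \qquad \omega_k = \frac{2\pi k}{T},$$
so that substitution into \eqref{e21} gives the uncoupled family of stationary equations $i\omega_k u_k + \kappa \Lambda^\alpha u_k = f_k$ for $k \in \mathbb{Z}$. Taking the spatial Fourier transform produces the candidate solution via the multiplier
$$\hat{u}_k(\xi) = \frac{\hat{f}_k(\xi)}{i\omega_k + \kappa |\xi|^\alpha}.$$

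To make this rigorous I would follow the strategy indicated just before the statement: build Galerkin approximates in the spatial variable by truncating the Fourier support to $\{|\xi| \leq N\}$ (so the problem is smooth and finite-energy at every level), combine with the time-Fourier expansion above to write explicit approximate T-periodic solutions $u^N$, derive bounds uniform in $N$, and extract a weak limit. The strong identity $u(0) = u(T)$ in $H^{\alpha/2}(\mathbb{R})$ then follows from the continuous embedding $H^1(0,T;H^{\alpha/2}) \hookrightarrow C([0,T];H^{\alpha/2})$ together with the uniform convergence of the time series.

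The heart of the argument is the estimate \eqref{e22}. Parseval in both variables rewrites each of the three norms $\|\Lambda^\alpha u\|_{L^2_t L^2_x}$, $\|u\|_{L^2_t \dot H^{\alpha/2}}$, and $\|\partial_t u\|_{L^2_t H^{\alpha/2}}$ as a weighted $\ell^2 L^2$-sum of $|\hat u_k(\xi)|^2$, and the identity $|i\omega_k + \kappa|\xi|^\alpha|^2 = \omega_k^2 + \kappa^2|\xi|^{2\alpha}$ yields the three pointwise multiplier bounds
$$\frac{|\xi|^{2\alpha}}{\omega_k^2 + \kappa^2|\xi|^{2\alpha}} \leq \frac{1}{\kappa^2}, \qquad \frac{|\xi|^\alpha}{\omega_k^2 + \kappa^2|\xi|^{2\alpha}} \leq \frac{1}{\kappa^2|\xi|^\alpha}, \qquad \frac{\omega_k^2}{\omega_k^2 + \kappa^2|\xi|^{2\alpha}} \leq 1,$$
which feed respectively into $\|f\|_{L^2_t L^2_x}$, $\|f\|_{L^2_t \dot H^{-\alpha/2}}$, and, when combined with the analogue applied to $\partial_t f_k$, into $\|f\|_{H^1_t \dot H^{-\alpha/2}} + \|f\|_{L^2_t L^2_x}$. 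The zero time-mode $k=0$ demands separate treatment: the equation collapses to $\kappa \Lambda^\alpha u_0 = f_0$, and an $L^2$-control of $u_0$ requires $f_0 \in \dot H^{-\alpha}$, which is exactly why the $L^2(0,T;\dot H^{-\alpha})$ summand is built into $X$.

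Uniqueness follows from linearity: the difference of two T-periodic strong solutions has Fourier-in-time coefficients $v_k$ satisfying $(i\omega_k + \kappa \Lambda^\alpha)v_k = 0$, and since $i\omega_k + \kappa|\xi|^\alpha$ vanishes only at $(k,\xi)=(0,0)$, every $\hat v_k(\xi)$ is zero almost everywhere. The main obstacle I anticipate is the careful constant tracking needed to realize the clean factor $(1+\kappa^{-1})$ in \eqref{e22}: a naive combination of the three multiplier estimates produces spurious $\kappa^{-1}$ factors, so the contribution of each $|\hat f_k(\xi)|^2$ must be distributed across the three summands of the $X$-norm according to the dominant term in $\omega_k^2 + \kappa^2|\xi|^{2\alpha}$ rather than estimating each piece of the $u$-norm against $X$ independently.
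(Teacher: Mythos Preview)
Your approach is correct and in the same spirit as the paper's, but more streamlined. The paper also expands in time-Fourier modes, but for the spatial variable it first chooses an abstract basis $\{\psi_j\}\subset C_0^\infty(\mathbb R)$ orthonormal in $\dot H^{\alpha/2}$, builds double approximants $u_{nm}$, inverts a $(2m+2)\times(2m+2)$ coefficient matrix, and derives the estimates $\|\Lambda^{\alpha/2}u\|$, $\|u_t\|$, $\|\Lambda^{\alpha/2}u_t\|$ by testing the approximate equation against suitable linear combinations of the basis. Only in a final ``make-up'' step does it take the spatial Fourier transform to obtain the remaining two bounds $\|u\|_{L^2_tL^2_x}\le\kappa^{-1}\|f\|_{L^2_t\dot H^{-\alpha}}$ and $\|\Lambda^\alpha u\|_{L^2_tL^2_x}\le\kappa^{-1}\|f\|_{L^2_tL^2_x}$. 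You short-circuit all of this by taking the spatial Fourier transform from the outset: since $\Lambda^\alpha$ is already diagonal on the Fourier side, your explicit multiplier $\hat u_k(\xi)=\hat f_k(\xi)/(i\omega_k+\kappa|\xi|^\alpha)$ delivers all five of the paper's estimates (2.5)--(2.9) as the three pointwise bounds you wrote, with no matrix inversion needed. Your zero-mode remark is exactly the content of the paper's (2.8). Regarding your worry about the constant $(1+\kappa^{-1})$: the paper does not actually justify how the individual estimates combine to give that clean factor either --- it derives (2.5)--(2.9) with constants $1$ or $\kappa^{-1}$ and then simply asserts (2.2) --- so you are not missing any trick there.
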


\begin{proof}
For a clear presentation, we divide the proof into the following three steps. We first try to find a function $u\in H^{1}(0,T;\dot{H}^{\frac{\alpha}{2}}(R))$ with $u_{t}\in L^{2}(0,T;L^{2}(R))$, which satisfies
\begin{equation}\label{e23}
\int^{T}_{0}\int_{R}u_{t}\varphi dxdt+\kappa\int^{T}_{0}\int_{R}\Lambda^{\frac{\alpha}{2}}u\Lambda^{\frac{\alpha}{2}}\varphi dxdt=\int^{T}_{0}\int_{R}f\varphi dxdt
\end{equation}
for all $\varphi\in C^{\infty}_{0,T}((0,T)\times R)=:\{\varphi\in C^{\infty}_{0}|~ \varphi(0)=\varphi(T)\}$. Secondly, we will show that there hold
\begin{equation}\label{e24}
u(0)=u(T) \quad in ~~H^{\frac{\alpha}{2}}(R),
\end{equation}
\begin{equation}\label{e25}
\|\Lambda^{\frac{\alpha}{2}}u\|_{L^{2}(0,T;L^{2}(R))}\leq \frac{1}{\kappa}\|f\|_{L^{2}(0,T;\dot{H}^{-\frac{\alpha}{2}}(R))},
\end{equation}
\begin{equation}\label{e26}
\|u_{t}\|_{L^{2}(0,T;L^{2}(R))}\leq \|f\|_{L_{2}(0,T;L^{2}(R))},
\end{equation}
\begin{equation}\label{e27}
\|\Lambda^{\frac{\alpha}{2}}u_{t}\|_{L^{2}(0,T;L^{2}(R))}\leq \frac{1}{\kappa} \|f_{t}\|_{L^{2}(0,T;\dot{H}^{-\frac{\alpha}{2}}(R))}.
\end{equation}
Finally, we use the Fourier transform method to make up the regularity of the solution, that is to say, there also hold
\begin{equation}\label{e28}
\|u\|_{L^{2}(0,T;L^{2}(R))}\leq \frac{1}{\kappa}\|f\|_{L^{2}(0,T;\dot{H}^{-\alpha}(R)))}
\end{equation}
and
\begin{equation}\label{e29}
\|\Lambda^{\alpha}u\|_{L^{2}(0,T;L^{2}(R))}\leq \frac{1}{\kappa} \|f\|_{L^{2}(0,T;L^{2}(R))}.
\end{equation}
{\bf Step 1: Existence and uniform estimates for periodic approximating sequence $\{u_{nm}\}$}

Let $\{\psi_{j}\}\subset C^{\infty}_{0}(R)$ be a basis of $H^{\frac{\alpha}{2}}(R)$ such that
\begin{equation}\label{e210}
\int_{R}\Lambda^{\frac{\alpha}{2}}\psi_{j}\Lambda^{\frac{\alpha}{2}}\psi_{k}dx=\delta_{jk},\quad \forall j,k\in N.
\end{equation}
Since we aim to find a time-periodic solution, it's natural to assume that the solution can be written as
$$
u=\lim_{n\rightarrow +\infty}u_{n}(t,x),
$$
where
\begin{equation}\label{e211}
u_{n}(t,x)=\sum_{l=1}^{n}a(n)_{l}(x)sin(\frac{2l\pi t}{T})+\sum_{l=0}^{n}b(n)_{l}(x)cos(\frac{2l\pi t}{T}).
\end{equation}
For each fixed $n\in N$, we first construct a Galerkin approximating sequence $\{u_{nm}\}_{m\in N}$ satisfying \eqref{e21} in the sense of \eqref{e23} by
\begin{equation}\label{e212}
u_{nm}(t,x)=\sum_{l=1}^{n}[\sum_{k=0}^{m}a(n,m)_{lk}\psi_{k}(x)]sin(\frac{2l\pi t}{T})+\sum_{l=0}^{n}[\sum_{k=0}^{m}b(n,m)_{lk}\psi_{k}(x)]cos(\frac{2l\pi t}{T}).
\end{equation}
It's necessary for us to prove the existence and uniqueness of approximating solutions $u_{nm}(t,x)$ (or the coefficients $a(n,m)_{lk}$ and $b(n,m)_{lk}$). To achieve the goal, we choose the test function $\varphi\in C^{\infty}_{0,T}((0,T)\times R)$ with the form
\begin{equation}\label{e213}
\varphi(t,x)=\sum_{i=1}^{n}\sum_{j=0}^{m}\psi_{j}(x)sin(\frac{2i\pi t}{T})+\sum_{i=0}^{n}\sum_{j=0}^{m}\psi_{j}(x)cos(\frac{2i\pi t}{T}).
\end{equation}
Taking \eqref{e212} and \eqref{e213} into \eqref{e23}, we obtain
\begin{align}\label{e214}
~&\int_{0}^{T}\int_{R}\partial_{t}u_{nm}\psi_{j}(x)sin(\frac{2i\pi t}{T})dxdt+\kappa\int_{0}^{T}\int_{R}\Lambda^{\frac{\alpha}{2}}u_{nm}\Lambda^{\frac{\alpha}{2}}\psi_{j}(x)sin(\frac{2i\pi t}{T})dxdt \nonumber\\
&=\int^{T}_{0}\int_{R}f\psi_{j}(x)sin(\frac{2i\pi t}{T})dxdt, \quad (i=1:n,~ j=0:m)
 \end{align}
and
\begin{align}\label{e215}
~&\int_{0}^{T}\int_{R}\partial_{t}u_{nm}\psi_{j}(x)cos(\frac{2i\pi t}{T})dxdt+\kappa\int_{0}^{T}\int_{R}\Lambda^{\frac{\alpha}{2}}u_{nm}\Lambda^{\frac{\alpha}{2}}\psi_{j}(x)cos(\frac{2i\pi t}{T})dxdt \nonumber\\
&=\int^{T}_{0}\int_{R}f\psi_{j}(x)cos(\frac{2i\pi t}{T})dxdt, \quad (i=0:n,~ j=0:m).
 \end{align}
Considering the follwing orthogonality properties of trigonometric functions
\begin{equation}
\begin{aligned}
~&\int^{T}_{0}sin(\frac{2l\pi t}{T})cos(\frac{2i\pi t}{T})dt=0, \quad \forall ~l,i\in N_{0}, \nonumber\\
& \int^{T}_{0}sin(\frac{2l\pi t}{T})sin(\frac{2i\pi t}{T})dt= \int^{T}_{0}cos(\frac{2l\pi t}{T})cos(\frac{2i\pi t}{T})dt=\frac{T}{2}\delta_{il},
 \end{aligned}
\end{equation}
we can reduce the \eqref{e214} and \eqref{e215} to
\begin{align}\label{e216}
~&-i\pi\sum_{k=0}^{m}b(n,m)_{ik}\int_{R}\psi_{k}(x)\psi_{j}(x)dx+\frac{\kappa T}{2}a(n,m)_{ij} \nonumber\\
&=\int^{T}_{0}\int_{R}f\psi_{j}(x)sin(\frac{2i\pi t}{T})dxdt, \quad (i=1:n,~ j=0:m)
 \end{align}
and
\begin{align}\label{e217}
~&i\pi\sum_{k=0}^{m}a(n,m)_{ik}\int_{R}\psi_{k}(x)\psi_{j}(x)dx+\frac{\kappa T}{2}b(n,m)_{ij} \nonumber\\
&=\int^{T}_{0}\int_{R}f\psi_{j}(x)cos(\frac{2i\pi t}{T})dxdt, \quad (i=0:n,~ j=0:m).
 \end{align}
For a fixed $i\in \{1,2,...,n\}$, we can write \eqref{e216} and \eqref{e217} into a algebraic equation set
\begin{equation}\label{e218}
Ay=F,
\end{equation}
where $A$ is a $(2m+2)\times(2m+2)$ matrix
\begin{equation}  \nonumber\\
\left[
\begin{array}{cc}
\frac{\kappa T}{2}\delta_{jk}  &  -i\pi\int_{R}\psi_{j}(x)\psi_{k-m-1}(x)\\
(k=0:m, ~j=0:m) & (k=m+1:2m+1, ~j=0:m)\\
~ & ~\\
i\pi\int_{R}\psi_{j-m-1}(x)\psi_{k}(x)  & \frac{\kappa T}{2}\delta_{(j-m-1)(k-m-1)} \\
(k=0:m, ~j=m+1:2m+1) &  (k=m+1:2m+1, ~j=m+1:2m+1)\\
\end{array}
\right],
\end{equation}
$y$ is a $(2m+2)\times 1$ vector
\begin{equation}  \nonumber\\
\left[
\begin{array}{c}
a(n,m)_{ij} \\
(j=0:m) \\
~ \\
b(n,m)_{i(j-m-1)}  \\
(j=m+1:2m+1) \\
\end{array}
\right]
\end{equation}
and $F$ is also a $(2m+2)\times 1$ vector
\begin{equation}  \nonumber\\
\left[
\begin{array}{c}
\int^{T}_{0}\int_{R}f\psi_{j}(x)sin(\frac{2i\pi t}{T})dxdt \\
(j=0:m) \\
~ \\
\int^{T}_{0}\int_{R}f\psi_{j-m-1}(x)cos(\frac{2i\pi t}{T})dxdt \\
(j=m+1:2m+1) \\
\end{array}
\right].
\end{equation}
It's obvious that $A$ is invertible, then it follows from \eqref{e218} that
$$
y=A^{-1}F
$$
can be uniquely determined.

Now we establish uniform estimates for the Galerkin approximating solutions $u_{nm}(t,x)$. Multiplying \eqref{e214} by $a(n,m)_{ij}$, multiplying \eqref{e215} by $b(n,m)_{ij}$ and adding them together yield
$$
\int^{T}_{0}\int_{R}\partial_{t}u_{nm}u_{nm}dxdt+\kappa\int^{T}_{0}\int_{R}|\Lambda^{\frac{\alpha}{2}}u_{nm}|^{2}dxdt=\int^{T}_{0}\int_{R}fu_{nm}dxdt,
$$
which also gives
\begin{equation}\label{e219}
\|\Lambda^{\frac{\alpha}{2}}u_{nm}(t)\|_{L^{2}(0,T;L^{2}(R))}\leq \frac{1}{\kappa}\|f\|_{L^{2}(0,T;\dot{H}^{-\frac{\alpha}{2}}(R))}
\end{equation}
due to $u_{nm}(t)$ is T-periodic.
Similarly, multiplying \eqref{e214} by $-b(n,m)_{ij}\frac{2i\pi}{T}$, multiplying \eqref{e215} by $a(n,m)_{ij}\frac{2i\pi}{T}$ and adding them together yield
$$
\int^{T}_{0}\int_{R}|\partial_{t}u_{nm}|^{2}dxdt+\kappa\int^{T}_{0}\int_{R}\Lambda^{\frac{\alpha}{2}}u_{nm}\partial_{t}\Lambda^{\frac{\alpha}{2}}u_{nm}dxdt
=\int^{T}_{0}\int_{R}f\partial_{t}u_{nm}dxdt.
$$
The T-periodicity of $\Lambda^{\frac{\alpha}{2}}u_{nm}(t)$ allows that
\begin{equation}\label{e220}
\|\partial_{t}u_{nm}(t)\|_{L^{2}(0,T;L^{2}(R))}\leq \|f\|_{L^{2}(0,T;L^{2}(R))}.
\end{equation}
At last, multiplying \eqref{e214} by $-a(n,m)_{ij}(\frac{2i\pi}{T})^{2}$, multiplying \eqref{e215} by $-b(n,m)_{ij}(\frac{2i\pi}{T})^{2}$ and adding them together yield
$$
\int^{T}_{0}\int_{R}\partial_{t}u_{nm}\partial_{tt}u_{nm}dxdt+\kappa\int^{T}_{0}\int_{R}\Lambda^{\frac{\alpha}{2}}u_{nm}\partial_{tt}\Lambda^{\frac{\alpha}{2}}u_{nm}dxdt
=\int^{T}_{0}\int_{R}f\partial_{tt}u_{nm}dxdt.
$$
There also follows from the T-periodicity of $\Lambda^{\frac{\alpha}{2}}u_{nm}(t), \partial_{t}u_{nm}(t)$ and $f(t)$ that
\begin{equation}\label{e221}
\|\Lambda^{\frac{\alpha}{2}}(\partial_{t}u_{nm}(t))\|_{L^{2}(0,T;L^{2}(R))}\leq \frac{1}{\kappa}\|f_{t}\|_{L^{2}(0,T;\dot{H}^{-\frac{\alpha}{2}}(R))}.
\end{equation}
Therefore, the uniform bounds \eqref{e219}-\eqref{e221} (for $n$ fixed) imply that there exist a function $u_{n}\in H^{1}(0,T;\dot{H}^{-\frac{\alpha}{2}}(R))$ with $\partial_{t}u_{n}\in L^{2}(0,T;L^{2}(R))$ such that
\begin{equation}\label{e222}
\left\{ \begin{array}{lll}
\Lambda^{\frac{\alpha}{2}}u_{nm}\rightharpoonup \Lambda^{\frac{\alpha}{2}}u_{n}~~ in~~L^{2}(0,T;L^{2}(R)), \\
\partial_{t}u_{nm}\rightharpoonup \partial_{t}u_{n}~~ in~~L^{2}(0,T;L^{2}(R)), \\
\Lambda^{\frac{\alpha}{2}}(\partial_{t}u_{nm})\rightharpoonup \Lambda^{\frac{\alpha}{2}}(\partial_{t}u_{n})~~ in~~L^{2}(0,T;L^{2}(R)).
 \end{array} \right.
\end{equation}
Hence it follows from \eqref{e222} that
\begin{equation}\label{e223}
\int^{T}_{0}\int_{R}\partial_{t}u_{n}\varphi dxdt+\kappa\int^{T}_{0}\int_{R}\Lambda^{\frac{\alpha}{2}}u_{n}\Lambda^{\frac{\alpha}{2}}\varphi dxdt=\int^{T}_{0}\int_{R}f\varphi dxdt,
\end{equation}
for all $\varphi\in C^{\infty}_{0,T}((0,T)\times R)$.

{\bf Step 2: Periodicity and uniform estimates of sequence $\{u_{n}\}$}

To prove that each $u_{n}$ is T-periodic in $H^{\frac{\alpha}{2}}(R)$, let's define the subspace
$$
S_{m}:=span\{\Lambda^{\frac{\alpha}{2}}\psi_{0}, \Lambda^{\frac{\alpha}{2}}\psi_{1},...,\Lambda^{\frac{\alpha}{2}}\psi_{m} \}\subset L^{2}(R).
$$
Then it's easy to see that the embedding
$$
H^{1}(0,T;S_{m})\hookrightarrow C(0,T;S_{m})
$$
is compact due to the finite dimension of $S_{m}$. Then the estimate \eqref{e221} implies that the subsequence $\{u_{nm}\}_{m\in N}$ can be chosen such that $\Lambda^{\frac{\alpha}{2}}u_{nm}(t)\rightarrow \Lambda^{\frac{\alpha}{2}}u_{n}(t)$ uniformly in $[0,T]$. The T-periodicity of $u_{nm}(t)$ shows that
$u_{n}(0)=u_{n}(T)$ in $\dot{H}^{\frac{\alpha}{2}}(R)$. Similarly, the estimate \eqref{e220} can yield that $u_{n}(0)=u_{n}(T)$ in $L^{2}(R)$. Thus, $u_{n}(t,x)$ is T-periodic in $H^{\frac{\alpha}{2}}(R)$ with the form of \eqref{e211}.

The similar process in step 1 can be used on sequence $\{u_{n}\}$, which gives
\begin{equation}\label{e224}
\left\{ \begin{array}{lll}
\|\Lambda^{\frac{\alpha}{2}}u_{n}\|_{L^{2}(0,T;L^{2}(R))}\leq \frac{1}{\kappa}\|f\|_{L^{2}(0,T;\dot{H}^{-\frac{\alpha}{2}}(R))}, \\
\|\partial_{t}u_{n}\|_{L^{2}(0,T;L^{2}(R))}\leq \|f\|_{L_{2}(0,T;L^{2}(R))}, \\
\|\Lambda^{\frac{\alpha}{2}}(\partial_{t}u_{n})\|_{L^{2}(0,T;L^{2}(R))}\leq \frac{1}{\kappa} \|f_{t}\|_{L^{2}(0,T;\dot{H}^{-\frac{\alpha}{2}}(R))} .
 \end{array} \right.
\end{equation}
Thus there exist a function $u\in H^{1}(0,T;\dot{H}^{\frac{\alpha}{2}}(R))$ with $u_{t}\in L^{2}(0,T;L^{2}(R))$ such that
\begin{equation}\label{e225}
\left\{ \begin{array}{lll}
\Lambda^{\frac{\alpha}{2}}u_{n}\rightharpoonup \Lambda^{\frac{\alpha}{2}}u~~ in~~L^{2}(0,T;L^{2}(R)), \\
\partial_{t}u_{n}\rightharpoonup \partial_{t}u~~ in~~L^{2}(0,T;L^{2}(R)), \\
\Lambda^{\frac{\alpha}{2}}(\partial_{t}u_{n})\rightharpoonup \Lambda^{\frac{\alpha}{2}}(\partial_{t}u)~~ in~~L^{2}(0,T;L^{2}(R)).
 \end{array} \right.
\end{equation}
Then \eqref{e23} follows from \eqref{e223} and \eqref{e225} and the estimates \eqref{e25}-\eqref{e27} follow from \eqref{e211} and \eqref{e224}.

{\bf Step 3: Periodicity and make-up regularity of $u(t,x)$}

It's sufficient to finish the proof of theorem if we can establish the T-periodicity of $u$ in \eqref{e24} and the make-up regularity of $u$ in \eqref{e28} and \eqref{e29}. In addition, it's worth noting that the make-up regularity is crucial to deduce the existence and uniqueness of nonlinear problem in next section. Now let's first show $u(t,x)$ is T-periodic. For $\varphi\in C^{\infty}_{0,T}((0,T)\times R)$, it follows from $u(t,x)$ is a weak solution of \eqref{e23} that
\begin{equation}\label{e226}
\int_{R}(u(T)-u(0))\varphi(0)dx-\int^{T}_{0}\int_{R}u\varphi_{t}dxdt+k\int^{T}_{0}\int_{R}\Lambda^{\frac{\alpha}{2}}u \Lambda^{\frac{\alpha}{2}}\varphi dxdt =\int^{T}_{0}\int_{R}f\varphi dxdt.
\end{equation}
On the other hand, \eqref{e223} gives
$$
\int_{R}(u_{n}(T)-u_{n}(0))\varphi(0)dx-\int^{T}_{0}\int_{R}u_{n}\varphi_{t}dxdt+k\int^{T}_{0}\int_{R}\Lambda^{\frac{\alpha}{2}}u_{n} \Lambda^{\frac{\alpha}{2}}\varphi dxdt =\int^{T}_{0}\int_{R}f\varphi dxdt.
$$
Combining the T-periodicity of $u_{n}(t)$ and \eqref{e225} with this formula, we have
\begin{equation}\label{e227}
-\int^{T}_{0}\int_{R}u\varphi_{t}dxdt+k\int^{T}_{0}\int_{R}\Lambda^{\frac{\alpha}{2}}u \Lambda^{\frac{\alpha}{2}}\varphi dxdt =\int^{T}_{0}\int_{R}f\varphi dxdt.
\end{equation}
Thus, $u(t,x)$ is T-periodic by comparing \eqref{e226} with \eqref{e227} due to the arbitrary of $\varphi$.

Now we are in the position to give the proof of \eqref{e28} and \eqref{e29} by using the techniques of Fourier transform. Taking the Fourier transform of \eqref{e21}, we have
\begin{equation}\label{e228}
\partial_{t}\hat{u}(t,\xi)+\kappa|\xi|^{\alpha}\hat{u}(t,\xi)=\hat{f}(t,\xi).
\end{equation}
It's known that $\||\xi|^{\frac{\alpha}{2}}\hat{u}\|_{L^{2}(0,T;L^{2}(R))}$ is well defined from \eqref{e25}-\eqref{e27}. Choose any $r>0$, it's obvious that $\|\hat{u}\|_{L^{2}(0,T;L^{2}([-r,r]^{c}))}$ and $\||\xi|^{\alpha}\hat{u}\|_{L^{2}(0,T;L^{2}([-r,r]))}$ are also well-defined due to
\begin{equation}\label{e229}
\|\hat{u}\|_{L^{2}(0,T;L^{2}([-r,r]^{c}))}=\||\xi|^{-\frac{\alpha}{2}}|\xi|^{\frac{\alpha}{2}}\hat{u}\|_{L^{2}(0,T;L^{2}([-r,r]^{c}))}\leq r^{-\frac{\alpha}{2}}\||\xi|^{\frac{\alpha}{2}}\hat{u}\|_{L^{2}(0,T;L^{2}(R))}
\end{equation}
and
\begin{equation}\label{e230}
\||\xi|^{\alpha}\hat{u}\|_{L^{2}(0,T;L^{2}([-r,r]))}=\||\xi|^{\frac{\alpha}{2}}|\xi|^{\frac{\alpha}{2}}\hat{u}\|_{L^{2}(0,T;L^{2}([-r,r]))}\leq r^{\frac{\alpha}{2}}\||\xi|^{\frac{\alpha}{2}}\hat{u}\|_{L^{2}(0,T;L^{2}(R))}.
\end{equation}
Taking the square on both sides of \eqref{e228}, we can obtain
\begin{equation}\label{e231}
|\partial_{t}\hat{u}|^{2}+\kappa^{2}|\xi|^{2\alpha}|\hat{u}|^{2}+\kappa\partial_{t}(|\xi|^{\alpha}|\hat{u}|^{2})=|\hat{f}|^{2}.
\end{equation}
It follows from (\ref{e231}) that
$$
\kappa^{2}|\hat{u}|^{2}+\kappa\partial_{t}(|\xi|^{-\alpha}|\hat{u}|^{2})\leq|\xi|^{-2\alpha}|\hat{f}|^{2}.
$$
Thanks to \eqref{e229} and the T-periodicity of $u(t)$, integrating both sides of above inequality over $[0,T]\times [-r,r]^{c}$ yields
\begin{equation}\label{e232}
\int^{T}_{0}\|\hat{u}\|^{2}_{L^{2}([-r,r]^{c})}dt\leq \frac{1}{\kappa^{2}}\int^{T}_{0}\||\xi|^{-\alpha}\hat{f}\|^{2}_{L^{2}([-r,r]^{c})}dt\leq \frac{1}{\kappa^{2}}\|f\|^{2}_{L^{2}(0,T;\dot{H}^{-\alpha}(R))},
\end{equation}
then \eqref{e28} follows by letting $r\rightarrow 0$ in \eqref{e232}.

Similarly, we also can deduce that from \eqref{e231}
$$
\kappa^{2}|\xi|^{2\alpha}|\hat{u}|^{2}+\kappa\partial_{t}(|\xi|^{\alpha}|\hat{u}|^{2})\leq |\hat{f}|^{2}.
$$
Thanks to \eqref{e230} and the T-periodicity of $u(t)$, integrating both sides of above inequality over $[0,T]\times [-r,r]$ yields
\begin{equation}\label{e233}
\int^{T}_{0}\||\xi|^{\alpha}\hat{u}\|^{2}_{L^{2}([-r,r])}dt\leq \frac{1}{\kappa^{2}}\int^{T}_{0}\|\hat{f}\|^{2}_{L^{2}([-r,r])}dt\leq \frac{1}{\kappa^{2}}\|f\|^{2}_{L^{2}(0,T;L^{2}(R))},
\end{equation}
then \eqref{e29} follows by letting $r\rightarrow \infty$ in \eqref{e233}. It's obvious that the solution to linearized problem \eqref{e21} is unique, then the proof is completed.
\end{proof}

\section{The Proof of the Theorem \ref{thm1.1}}

In this section, a contraction mapping argument would be constructed to establish the existence and uniqueness of T-periodic solution to \eqref{e11}.
\begin{proof}
For convenience, let's first introduce two spaces
$$
X_{T}:=\{\phi\in X|~\phi(0)=\phi(T)~in ~\dot{H}^{-\frac{\alpha}{2}}(R)\}
$$
and
$$
Y_{T}:=\{\phi\in Y=H^{1}(0,T;H^{\frac{\alpha}{2}}(R))\cap L^{2}(0,T;\dot{H}^{\alpha}) |~\phi(0)=\phi(T)~in ~H^{\frac{\alpha}{2}}(R)\}.
$$
By Gagliardo-Nirenberg inequality and Kato-Ponce type commutator estimates in \cite{Kato}, we will show that $vv_{x}\in X_{T}$ for any $v\in Y_{T}$. It's easy to see that $(vv_{x})(0)=(vv_{x})(T)$ if $v\in Y_{T}$. In addition, we can use the interpolation theorem to obtain
\begin{align}\label{e31}
(\int^{T}_{0}\|vv_{x}\|^{2}_{L^{2}}dt)^{\frac{1}{2}}
&=(\int^{T}_{0}\int_{R}v^{2}v_{x}^{2}dxdt)^{\frac{1}{2}}\leq \max_{t\in [0,T]}\|v(t)\|_{L^{\infty}}(\int^{T}_{0}\|v_{x}\|^{2}_{L^{2}}dt)^{\frac{1}{2}} \nonumber\\
&\lesssim \|v\|_{Y}(\int^{T}_{0}\|\Lambda^{\alpha}v\|_{L^{2}}^{\frac{2}{\alpha}}\|v\|_{L^{2}}^{2-\frac{2}{\alpha}}dt)^{\frac{1}{2}} \nonumber\\
&\leq \|v\|_{Y}(\int^{T}_{0}\|\Lambda^{\alpha}v\|_{L^{2}}^{2}dt)^{\frac{1}{2\alpha}}(\int^{T}_{0}\|v\|_{L^{2}}^{2}dt)^{\frac{\alpha-1}{2\alpha}} \nonumber\\
&\leq \|v\|_{Y}^{2},
 \end{align}
\begin{align}\label{e32}
(\int^{T}_{0}\|vv_{x}\|^{2}_{\dot{H}^{-\frac{\alpha}{2}}}dt)^{\frac{1}{2}}
&\leq\frac{1}{2}(\int^{T}_{0}\|\Lambda^{1-\frac{\alpha}{2}}(vv)\|^{2}_{L^{2}}dt)^{\frac{1}{2}}\lesssim (\int^{T}_{0}\|\Lambda^{1-\frac{\alpha}{2}}v\|^{2}_{L^{2}}\|v\|^{2}_{L^{\infty}}dt)^{\frac{1}{2}}  \nonumber\\
&\lesssim \|v\|_{Y}(\int^{T}_{0}\|\Lambda^{1-\frac{\alpha}{2}}v\|^{2}_{L^{2}}dt)^{\frac{1}{2}} \lesssim \|v\|_{Y}(\int^{T}_{0}\|\Lambda^{\frac{\alpha}{2}}v\|^{2-\alpha}_{L^{2}}\|v\|_{L^{2}}^{\alpha}dt)^{\frac{1}{2}} \nonumber\\
&\leq \|v\|_{Y}(\int^{T}_{0}\|\Lambda^{\frac{\alpha}{2}}v\|_{L^{2}}^{2}dt)^{\frac{\alpha}{4}}(\int^{T}_{0}\|v\|_{L^{2}}^{2}dt)^{\frac{2-\alpha}{4}} \nonumber\\
&\leq \|v\|_{Y}^{2}
\end{align}
and
\begin{align}\label{e33}
(\int^{T}_{0}\|\partial_{t}(vv_{x})\|^{2}_{\dot{H}^{-\frac{\alpha}{2}}}dt)^{\frac{1}{2}}
&=(\int^{T}_{0}\|\partial_{x}(vv_{t})\|^{2}_{\dot{H}^{-\frac{\alpha}{2}}}dt)^{\frac{1}{2}}\leq (\int^{T}_{0}\|\Lambda^{1-\frac{\alpha}{2}}(vv_{t})\|^{2}_{L^{2}}dt)^{\frac{1}{2}}  \nonumber\\
&\lesssim(\int^{T}_{0}\|\Lambda^{1-\frac{\alpha}{2}}v_{t}\|_{L^{2}}^{2}\|v\|^{2}_{L^{\infty}}dt+
\int^{T}_{0}\|\Lambda^{1-\frac{\alpha}{2}}v\|_{L^{2}}^{2}\|v_{t}\|^{2}_{L^{\infty}}dt)^{\frac{1}{2}} \nonumber\\
&\lesssim(\|v\|_{Y}^{2}\int^{T}_{0}\|\Lambda^{1-\frac{\alpha}{2}}v_{t}\|_{L^{2}}^{2}dt+
\|v\|_{Y}^{2}\max_{t\in [0,T]}\|\Lambda^{1-\frac{\alpha}{2}}v(t)\|^{2}_{L^{2}})^{\frac{1}{2}} \nonumber\\
&\lesssim\|v\|_{Y}^{2},
\end{align}
where the embedding $H^{\frac{\alpha}{2}}(R)\hookrightarrow L^{\infty}(R)$ and H\"{o}lder inequality are also used.

At last, it remains to estimate $\|vv_{x}\|_{L^{2}(0,T;\dot{H}^{-\alpha}(R))}$, where the restriction on $1<\alpha<\frac{3}{2}$ is required. As $vv_{x}=\frac{1}{2}\partial_{x}(v^{2})$, we take the Fourier transform to obtain
$$
\widehat{vv_{x}}(t,\xi)=\frac{1}{2}i\xi\widehat{v^{2}}(t,\xi)
$$
and
$$
|\widehat{v^{2}}(t,\xi)|\leq \|v^{2}\|_{L^{1}},\quad for~~(t,\xi)\in (0,T)\times R.
$$
Hence, there hold
$$
|\widehat{vv_{x}}(t,\xi)|\leq \frac{1}{2}|\xi|\|v^{2}\|_{L^{1}}
$$
and
$$
|\xi|^{-\alpha}|\widehat{vv_{x}}(t,\xi)|\leq \frac{1}{2}|\xi|^{1-\alpha}\|v^{2}\|_{L^{1}}, \quad a.e~~on ~~(0,T)\times R.
$$
Then we have
\begin{align}\label{e34}
(\int^{T}_{0}\|vv_{x}\|^{2}_{\dot{H}^{-\alpha}}dt)^{\frac{1}{2}}
&=(\int^{T}_{0}\int_{R}|\xi|^{-2\alpha}|\widehat{vv_{x}}(t,\xi)|^{2}d\xi dt)^{\frac{1}{2}} \nonumber\\
&\leq (\frac{1}{4}\int^{T}_{0}\int_{[-1,1]}|\xi|^{2-2\alpha}\|v^{2}\|^{2}_{L^{1}}d\xi dt \nonumber\\
&+\int^{T}_{0}\int_{[-1,1]^{c}}|\xi|^{-2\alpha}|\widehat{vv_{x}}(t,\xi)|^{2}d\xi dt)^{\frac{1}{2}} \nonumber\\
&\leq (\frac{1}{4}\int^{T}_{0}\|v^{2}\|^{2}_{L^{1}}dt\int_{[-1,1]}|\xi|^{2-2\alpha}d\xi
+\int^{T}_{0}\int_{[-1,1]^{c}}|\widehat{vv_{x}}(t,\xi)|^{2}d\xi dt)^{\frac{1}{2}} \nonumber\\
&\leq (\frac{1}{2(3-2\alpha)}\|v\|^{4}_{L^{4}(0,T;L^{2}(R))}+\|vv_{x}\|^{2}_{L^{2}(0,T;L^{2}(R))})^{\frac{1}{2}} \nonumber\\
&\lesssim (\frac{1}{2(3-2\alpha)}\|v\|^{4}_{Y}+\|v\|^{4}_{Y})^{\frac{1}{2}} \nonumber\\
&\lesssim (1+\frac{1}{\sqrt{2(3-2\alpha)}})\|v\|_{Y}^{2}.
\end{align}
From \eqref{e31}-\eqref{e34}, we obtain
\begin{equation}\label{e35}
\|vv_{x}\|_{X}\lesssim (1+\frac{1}{\sqrt{2(3-2\alpha)}})\|v\|^{2}_{Y}.
\end{equation}

Based on the result in section 2, we find that the solution of \eqref{e11} can be regarded as a fixed point of mapping $\Gamma: Y_{T}\rightarrow Y_{T}$, such that for each $v\in Y_{T}$, $u=\Gamma(v)$ is the T-periodic solution of the following problem
\begin{equation}\label{e36}
u_{t}+\kappa\Lambda^{\alpha}u=f-vv_{x},\quad in ~~(0,T)\times R.
\end{equation}
However, it's difficult to establish directly that the mapping $\Gamma$ is a contraction mapping from $Y_{T}$ to $Y_{T}$. In order to achieve the goal, we have to take some restrictions. Let's define a subspace of $Y_{T}$ by
$$
Y^{*}_{T}=\{\phi\in Y_{T}|~\|\phi\|_{Y}\leq 3(1+\frac{1}{\kappa})\|f\|_{X} \}
$$
According to Theorem \ref{thm2.1} and \eqref{e35}, for a given $v\in Y_{T}^{*}$, we can conclude that there exists a unique solution $u\in Y_{T}$ of \eqref{e36}, which also satisfies
\begin{equation}\label{e37}
\|u\|_{Y}\leq (1+\frac{1}{\kappa})(\|f\|_{X}+\|vv_{x}\|_{X})\leq (1+\frac{1}{\kappa})(\|f\|_{X}+\frac{9\sqrt{2(3-2\alpha)}+9}{\sqrt{2(3-2\alpha)}}(1+\frac{1}{\kappa})^{2}\|f\|_{X}^{2}).
\end{equation}

In the following, we will prove that $\Gamma$ is a contraction mapping from $Y^{*}_{T}$ to $Y^{*}_{T}$. For any $v\in Y^{*}_{T}$, it follows from \eqref{e37} and the smallness condition of $\|f\|_{X}$ that
$$
\|\Gamma(v)\|_{Y}=\|u\|_{Y}\leq (1+\frac{1}{\kappa})(\|f\|_{X}+\frac{9\sqrt{2(3-2\alpha)}+9}{\sqrt{2(3-2\alpha)}}(1+\frac{1}{\kappa})^{2}\|f\|_{X}^{2})\leq 3(1+\frac{1}{\kappa})\|f\|_{X},
$$
which means
$$
\Gamma(Y^{*}_{T})\subseteq Y^{*}_{T}.
$$
On the other hand, let $v_{1}, v_{2}\in Y^{*}_{T}$ and $u_{1}=\Gamma(v_{1}), u_{2}=\Gamma(v_{2})$, Theorem \ref{thm2.1} and the smallness condition of $\|f\|_{X}$ yield
\begin{align}\label{e38}
\|\Gamma(v_{1})-\Gamma(v_{2})\|_{Y}
&=\|u_{1}-u_{2}\|_{Y} \nonumber\\
&\leq (1+\frac{1}{\kappa})(\|v_{1}\|_{Y}+\|v_{2}\|_{Y})\|v_{1}-v_{2}\|_{Y} \nonumber\\
&\leq 6(1+\frac{1}{\kappa})^{2}\|f\|_{X}\|v_{1}-v_{2}\|_{Y}<\|v_{1}-v_{2}\|_{Y}.
\end{align}
Thus $\Gamma$ is a contraction mapping and the proof is completed.
\end{proof}
\begin{remark}
Based on the proof of Theorem \ref{thm1.1}, it's obvious that the estimate \eqref{e13} can be improved to
$$
\|u\|_{H^{1}(0,T;H^{\frac{\alpha}{2}}(R))}+\|\Lambda^{\alpha}u\|_{L^{2}(0,T;L^{2}(R))}\leq \beta(1+\frac{1}{\kappa})\|f\|_{X}
$$
for any $\beta>1$.
\end{remark}

\section{The proof of Theorem \ref{thm1.3}}
In this section, we mainly discuss the asymptotic stability of the T-periodic solution $u_{T}(t,x)$ obtained in section 3. It's worth noting that the smallness condition on the external force will play an important role here.
Before proving Theorem \ref{thm1.3}, let's introduce the lemma on existence of viscosity solution of \eqref{e11}, which has been proved in \cite{Xu}.
\begin{lemma}\label{lem4.2}
If $u_{0}\in L^{2}(R)$ and $f\in L^{2}(0,T; \dot{H}^{-\frac{\alpha}{2}}(R)\cap L^{2}(R))$, then there exists a global viscosity weak solution $u\in L^{\infty}([0,T),L^{2}(R))\cap L^{2}([0,T),\dot{H}^{\frac{\alpha}{2}}(R))$ to \eqref{e11}, which satisfies for any given $T>0$,
\begin{equation}
\begin{aligned}
~&-\int^{T}_{0}\langle u, \psi_{t}\rangle dt-\frac{1}{2}\int^{T}_{0}\langle u^{2},\psi_{x}\rangle dt+\kappa\int^{T}_{0}\langle\Lambda^{\frac{\alpha}{2}}u,
\Lambda^{\frac{\alpha}{2}}\psi\rangle dt \nonumber\\
&=\langle u_{0},\psi(x,0)\rangle+\int^{T}_{0}\langle f,\psi\rangle dt, \quad for  ~~ \psi\in C_{c}^{\infty}([0,T)\times R).
 \end{aligned}
\end{equation}
Moreover, there holds
$$
\sup_{t\geq 0}\|u(t)\|_{L^{2}}^{2}+\kappa\int^{T}_{0}\|\Lambda^{\frac{\alpha}{2}} u(t)\|_{L^{2}}^{2}dt\leq \|u_{0}\|_{L^{2}(R)}^{2}+\frac{1}{\kappa}\|f\|_{L^{2}(0,T; \dot{H}^{-\frac{\alpha}{2}}(R)\cap L^{2}(R))}^{2}.
$$
\end{lemma}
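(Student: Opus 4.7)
My plan is to construct the viscosity weak solution by the vanishing viscosity method. Introduce the regularized equation
$$
u^{\varepsilon}_{t}+u^{\varepsilon}u^{\varepsilon}_{x}+\kappa\Lambda^{\alpha}u^{\varepsilon}-\varepsilon u^{\varepsilon}_{xx}=f,\qquad u^{\varepsilon}(0,x)=u_{0}^{\varepsilon}(x),
$$
where $u_{0}^{\varepsilon}$ is a smooth mollification of $u_{0}$ in $L^{2}(R)$ and $\varepsilon>0$. For each fixed $\varepsilon$ the equation is a uniformly parabolic problem (the term $\kappa\Lambda^{\alpha}$ is a bounded perturbation relative to $-\varepsilon\partial_{xx}$), so a standard Picard iteration in $H^{s}(R)$ for $s$ large (or a parabolic Galerkin scheme in $H^{s}$) produces a local smooth solution $u^{\varepsilon}$. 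Global existence will then follow from the energy bound below.

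The key estimate exploits the antisymmetry of the transport term. Testing the regularized equation against $u^{\varepsilon}$ and integrating by parts, the nonlinear contribution vanishes,
$$
\int_{R}u^{\varepsilon}u^{\varepsilon}_{x}\cdot u^{\varepsilon}\,dx=\frac{1}{3}\int_{R}\partial_{x}(u^{\varepsilon})^{3}\,dx=0,
$$
so that
$$
\frac{1}{2}\frac{d}{dt}\|u^{\varepsilon}\|_{L^{2}}^{2}+\kappa\|\Lambda^{\frac{\alpha}{2}}u^{\varepsilon}\|_{L^{2}}^{2}+\varepsilon\|u^{\varepsilon}_{x}\|_{L^{2}}^{2}=\langle f,u^{\varepsilon}\rangle.
$$
Splitting the pairing on the right into its $\dot{H}^{-\frac{\alpha}{2}}$/$\dot{H}^{\frac{\alpha}{2}}$ duality and its $L^{2}/L^{2}$ pairing and using Young's inequality to absorb half of the dissipation, then integrating in $t$, yields the $\varepsilon$-independent estimate
$$
\sup_{t\in[0,T]}\|u^{\varepsilon}(t)\|_{L^{2}}^{2}+\kappa\int_{0}^{T}\|\Lambda^{\frac{\alpha}{2}}u^{\varepsilon}\|_{L^{2}}^{2}dt\leq \|u_{0}^{\varepsilon}\|_{L^{2}}^{2}+\frac{1}{\kappa}\|f\|_{L^{2}(0,T;\dot{H}^{-\frac{\alpha}{2}}(R)\cap L^{2}(R))}^{2},
$$
which both extends $u^{\varepsilon}$ globally and yields the target energy bound modulo passage to the limit and lower semicontinuity.

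For $\varepsilon\to 0$ I extract a subsequence converging weakly-$\ast$ in $L^{\infty}(0,T;L^{2}(R))$ and weakly in $L^{2}(0,T;\dot{H}^{\frac{\alpha}{2}}(R))$ to a candidate $u$. To pass to the limit in the nonlinear term $\frac{1}{2}\partial_{x}(u^{\varepsilon})^{2}$, strong convergence on compact sets in $x$ is needed. From the equation itself, $u^{\varepsilon}_{t}$ is uniformly bounded in $L^{2}(0,T;H^{-s}(R))$ for some $s>0$: the dissipative piece $\kappa\Lambda^{\alpha}u^{\varepsilon}$ is bounded in $L^{2}(0,T;\dot{H}^{-\frac{\alpha}{2}})$, $\varepsilon u^{\varepsilon}_{xx}$ vanishes in any reasonable norm, and $u^{\varepsilon}u^{\varepsilon}_{x}=\frac{1}{2}\partial_{x}(u^{\varepsilon})^{2}$ is controlled in a negative Sobolev norm since $(u^{\varepsilon})^{2}\in L^{\infty}(0,T;L^{1})$. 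The Aubin--Lions--Simon lemma applied on bounded spatial windows then gives strong convergence of $u^{\varepsilon}$ in $L^{2}(0,T;L^{2}_{\mathrm{loc}}(R))$, which suffices to pass to the limit in $\int_{0}^{T}\langle (u^{\varepsilon})^{2},\psi_{x}\rangle\,dt$ against any $\psi\in C_{c}^{\infty}([0,T)\times R)$. The initial condition survives because the test functions are permitted to be nonzero at $t=0$, and the energy inequality follows from weak lower semicontinuity.

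The main obstacle I expect is the compactness step on the unbounded domain $R$, where standard embeddings $H^{\frac{\alpha}{2}}\hookrightarrow L^{2}$ fail to be compact. The remedy is to restrict to spatial cutoffs $B_{N}=[-N,N]$ (using the compact support of each fixed $\psi$), apply Aubin--Lions on $B_{N}$ where the local embedding $H^{\frac{\alpha}{2}}(B_{N})\hookrightarrow L^{2}(B_{N})$ is compact, and then extract by a diagonal procedure over $N$. A secondary technical subtlety is rigorously justifying the cubic cancellation at the approximate level, which requires enough decay of $u^{\varepsilon}$; this is supplied by the $\varepsilon\partial_{xx}$ regularization together with the smoothness and decay of the mollified datum $u_{0}^{\varepsilon}$, which propagate on the interval of smooth existence.
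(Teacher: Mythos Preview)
The paper does not actually prove this lemma; it is quoted from the companion reference \cite{Xu} and used as a black box in Section~4. So there is no in-paper argument to compare against, and your vanishing-viscosity construction is a reasonable and standard way to supply the missing proof.

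Two small points of exposition are worth tightening. First, your phrase ``splitting the pairing into its $\dot H^{-\alpha/2}/\dot H^{\alpha/2}$ duality and its $L^{2}/L^{2}$ pairing'' is ambiguous: the right-hand side is a single pairing $\langle f,u^{\varepsilon}\rangle$, and you simply estimate it once via $\|f\|_{\dot H^{-\alpha/2}}\|\Lambda^{\alpha/2}u^{\varepsilon}\|_{L^{2}}$ and absorb with Young; the $L^{2}$ part of the norm on $f$ is not needed for the bound but is harmless since the intersection norm dominates. Second, in the $u^{\varepsilon}_{t}$ bound for Aubin--Lions, the information $(u^{\varepsilon})^{2}\in L^{\infty}(0,T;L^{1})$ alone does not place $\partial_{x}(u^{\varepsilon})^{2}$ in any $L^{2}(0,T;H^{-s})$; rather, use that $\alpha/2>1/2$ gives $u^{\varepsilon}\in L^{2}(0,T;L^{\infty})$ (from $H^{\alpha/2}\hookrightarrow L^{\infty}$) together with $u^{\varepsilon}\in L^{\infty}(0,T;L^{2})$ to get $(u^{\varepsilon})^{2}\in L^{2}(0,T;L^{2})$ and hence $\partial_{x}(u^{\varepsilon})^{2}\in L^{2}(0,T;H^{-1})$. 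With these adjustments the compactness step and the passage to the limit go through exactly as you describe.
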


Now we are in the position to prove the Theorem \ref{thm1.3}.
\begin{proof}
Let $w(t,x)=u(t,x)-u_{T}(t,x)$, then $w$ satisfies
\begin{equation}\label{e41}
\left\{ \begin{array}{ll}
 w_{t}+ww_{x}+(u_{T}w)_{x}+\kappa\Lambda^{\alpha}w=0,  \\
w_{0}(x)=u_{0}(x)-u_{T}(0,x),
 \end{array} \right.
\end{equation}

Multiplying both sides of \eqref{e42} by $w$ and integrating on $R$ give
\begin{equation}\label{e42}
\frac{1}{2}\frac{d}{dt}\int_{R}|w|^{2}dx+\kappa\int_{R}|\Lambda^{\frac{\alpha}{2}}w|^{2}dx=-\int_{R}(u_{T}w)_{x}wdx.
\end{equation}
On the other hand, there holds
\begin{align}\label{e43}
|\int_{R}(u_{T}w)_{x}wdx|
 &\leq |\int_{R} \Lambda^{1-\frac{\alpha}{2}}(u_{T}w)\Lambda^{\frac{\alpha}{2}}wdx| \nonumber\\
 &\leq \|\Lambda^{1-\frac{\alpha}{2}}(u_{T}w)\|_{L^{2}} \|\Lambda^{\frac{\alpha}{2}}w\|_{L^{2}}  \nonumber\\
 &\leq (\|\Lambda^{1-\frac{\alpha}{2}}u_{T}\|_{L^{2}}\|w\|_{L^{\infty}}+\|u_{T}\|_{L^{\frac{1}{\alpha-1}}}
\|\Lambda^{1-\frac{\alpha}{2}}w\|_{L^{\frac{2}{3-2\alpha}}})\|\Lambda^{\frac{\alpha}{2}}w\|_{L^{2}}  \nonumber\\
 &\leq (\|\Lambda^{1-\frac{\alpha}{2}}u_{T}\|_{L^{2}}+\|u_{T}\|_{L^{\frac{1}{\alpha-1}}})\|\Lambda^{\frac{\alpha}{2}}w\|_{L^{2}}^{2} \nonumber\\
 &\leq 2\|u_{T}\|_{H^{\frac{\alpha}{2}}}\|\Lambda^{\frac{\alpha}{2}}w\|_{L^{2}}^{2}.
\end{align}
Therefore \eqref{e42} and \eqref{e43} imply
\begin{equation}\label{e44}
\frac{d}{dt}\int_{R}|w|^{2}dx+2\kappa\int_{R}|\Lambda^{\frac{\alpha}{2}}w|^{2}dx \leq 4\|u_{T}\|_{H^{\frac{\alpha}{2}}}\|\Lambda^{\frac{\alpha}{2}}w\|_{L^{2}}^{2}.
 \end{equation}
Integrating \eqref{e44} with respect to time variable over $(0,T)$, we obtain
  \begin{equation}\label{e45}
 \int_{R}|w|^{2}dx+2\kappa\int_{0}^{T}\int_{R}|\Lambda^{\frac{\alpha}{2}}w|^{2}dxdt\leq 4\max_{t\in[0,T]}\|u_{T}\|_{H\frac{\alpha}{2}}\int_{0}^{T}\int_{R}|\Lambda^{\frac{\alpha}{2}}w|^{2}dxdt+\int_{R}|w_{0}|^{2}dx.
 \end{equation}
Thus \eqref{e14} follows from \eqref{e45}, \eqref{e13} and the smallness of $\|f\|_{X}\leq\frac{\kappa^{2}}{6(\kappa+1)^{2}}< \frac{\kappa^{2}}{6(\kappa+1)}$.
\end{proof}

    \bigskip

\noindent \textbf{Acknowledgement.} {This project is supported by National Natural Science Foundation of China (No:11571057).}


\begin{thebibliography}{99}


\bibitem{Bouchard} J. P. Bouchard and A. Georges, Anomalous diffusion in disordered media, Statistical mechanics, models and physical applications, physics reports 195 (1990).
\bibitem{Dong} H. Dong, D. Du and D. Li, Finite time singularities and global well-posedness for fractal Burgers equations, Indiana Univ. Math. J. 58 (2009) 807-821.
\bibitem{Kiselev} A. Kiselev, F. Nazarov and R. Shterenberg, Blow up and regularity for fractal Burgers euation, Dynamics of PDE 5 (2008) 211-240.
\bibitem{Galdi1} G. P. Galdi and H. Sohr, Existence and uniqueness of time-periodic physically reasonable Navier-Stokes flow past a body, Arch. Ration. Mech. Anal. 172 (2004) 363-406.
\bibitem{Chen} S. H. Chen, C. H. Hsia, C. Y. Jung and B. Kwon, Asymptotic stability and bifurcation of time-periodic solutions for the viscous Burger's equation, J. Math. Anal. Appl. 445 (2017) 655-676.
\bibitem{Caffarelli1} L. Caffarelli and L. Silvestre, An extension problem related to the fractional Laplacian, Comm. Partial Differential Equations 32 (2007) 1245-1260.
\bibitem{Caffarelli2} L. Caffarelli and L. Silvestre, Drift diffusion equations with fractional diffusion and the quasi-geostrophic equation, Ann. Math. 3 (2010) 1903-1930.
\bibitem{Constantin} P. Constantin, Euler equations, Navier-Stokes equations and trubulence, in Mathematical Foundation of Turbulent Viscous flows, Vol. 1871 of Lecture Notes in Math. 1C43, Springer, Berlin, (2006).
\bibitem{Galdi2}  G. P. Galdi, Existence and uniqueness of time-periodic solutions to the Navier-Stokes equations in the whole plane, Discrete Contin. Dyn. Syst. Ser. S 6 (2013) 1237-1257.
\bibitem{Hsia} C. H. Hsia, M. C. Shiue, On the asymptotic stability analysis and the existence of time-periodic solutions of the primitive equations, Indiana Univ. Math. J. 62 (2013) 403-441.
\bibitem{Droniou} J. Droniou, T. Gallouet and J. Vovelle, Global solution and smoothing effect for a non-local regularization of a hyperbolic equation, J. Evol. Equ. 3 (2003) 499-521.
\bibitem{Alibaud} N. Alibaud and B. Andreianov, Non-uniqueness of weak solutions for the fractal Burgers equation, Ann. Inst. Henri Poincar Anal. Non Lin¨¦aire 27 (2010) 997-1016.
\bibitem{Kobayashi} T. Kobayashi, Time periodic solutions of the Navier-Stokes equations with the time periodic Poiseuille flow under (GOC) for a symmetric perturbed channel in $R^{2}$ , J. Math. Soc. Japan 67 (2015) 1023-1042.
\bibitem{Kyed} M. Kyed, The existence and regularity of time-periodic solutions to the three-dimensional Navier-Stokes equations in the whole space, Nonlinearity 27 (2014) 2909-2935.
\bibitem{Kato} T. Kato and G. Ponce, Commutator estimates and the Euler and Navier-Stokes equations, Comm. Pure Appl. Math. 41 (1988) 891-907.
\bibitem{Miao} C. Miao and G. Wu, Global well-posedness of the critical Burgers equation in critiacal Besov spaces, J. Differential Equations, 247 (2009) 1673-1693.
\bibitem{Iwabuchi} T. Iwabuchi, Analyticity and large time behavior for the Burgers equation and the quasi-geostrophic equation, the both with the critical dissipation, Ann. Inst. Henri Poincar Anal. Non Lin¨¦aire 37 (2020) 855-876.
\bibitem{Cheng} M, Cheng and Y. Li, Time periodic solutions to the full hydrodynamic model to semiconductors, Commun. Math. Sci. 17 (2019) 413-445.
\bibitem{Xu} F. Xu, Y. Zhang and F. Li, Uniqueness and stability of steady-state solution with finite energy for the fractal Burgers equation, submitted.
\bibitem{Piotr} K. Piotr and Z. Piotr, On non-autonomously forced Burgers equation with periodic and Dirichlet boundary conditions, Proceedings of the Royal Society of Edinburgh, 150 (2020) 2025-2054.

\end{thebibliography}
\end{document}